\newtheorem{theorem}{Theorem}[section] 
\newtheorem{lema}{Lemma}[section]
\numberwithin{equation}{section} 
\title{Isoparametric functions on $\mathbb{R}^n\times\mathbb{M}^m$}
\author{Jurgen Julio-Batalla\footnote{E-mail address: jurgen.julio@cimat.mx}}
\date{}
\begin{document}

\maketitle

\begin{abstract}
We classify the isoparametric functions on $\mathbb{R}^n\times\mathbb{M}^m$, $n, m\geq2$, with compact level sets,  where $\mathbb{M}^m$ is a connected, closed Riemannian manifold of dimension $m$. Also, we classify the isoparametric hypersurfaces in $\mathbb{S}^2\times\mathbb{R}^2$ with constant principal curvatures. 
\end{abstract}

\textit{Key words and phrases}. Isoparametric functions; focal varieties; constant principal curvatures; minimal submanifolds.

\section{Introduction}
Let $(N,h)$ be a connected Riemannian manifold. A non-constant smooth function $f:N\rightarrow\mathbb{R}$ is called \textit{isoparametric} if there exist smooth functions $a,b:\mathbb{R}\rightarrow\mathbb{R}$ such that
$$(1)\;|\nabla f|^2=a(f)\quad \text{and}\quad (2)\;\Delta f=b(f).$$
The smooth hypersurfaces $M_t=f^{-1}(t)$ for $t$ regular value of $f$ are called \textit{isoparametric hypersurfaces}. The condition $(1)$ means that the hypersurfaces $M_t$ are parallel and the condition $(2)$ says that these hypersurfaces have constant mean curvatures. The preimage of the maximum and minimum  of the isoparametric function $f$ are denoted by $M_+$ and $M_-$ (resp.), they are called \textit{focal varieties} of $f$.\\
The problem of classification of the isoparametric hypersurfaces in Riemannian manifolds started with the works of É. Cartan who proved in \cite{2} that, when the ambient manifolds has constant curvature, a hypersurface is isoparametric if and only if has constant principal curvatures. A complete classification in Euclidean and real hyperbolic spaces followed,
 but the case of the spheres was richer much more difficult, and only recently a complete classification was obtained \cite{Chi}. Cartan classified the isoparametric hypersurfaces in the sphere with $l\in\{1,2,3\}$ different principal curvatures. Later, Münzner \cite{Munzner} proved that, an isoparametric hypersurface in $\mathbb{S}^n\subset\mathbb{R}^{n+1}$ with $l$ distinct principal curvatures is contained in a level set of a homogeneous polynomial of degree $l$ on $\mathbb{R}^{n+1}$ satisfying certain equations  known now as the Cartan–Münzner differential equations. He used this to prove  that the number $l$ of distinct principal curvatures  can only be 1; 2; 3; 4, or 6. Then several authors worked on the difficult cases of $l=4$ or 6 distinct principal curvatures: see
for instance \cite{ChiIII, Cecil, Miyaoka, Chi}.
For a more detailed study of  isoparametric hypersurfaces on space forms see for instance \cite{3}. \\
The study of isoparametric functions on general Riemannian manifolds started with the work of Q. M. Wang in
\cite{8}. Many interesting results have been obtained recently about isoparametric hypersurfaces on 
different spaces \cite{Exotic1, Exotic2, Thor, 6, 7}.\\ 
Isoparametric hypersurfaces allow to reduce  certain systems partial differential equations to ordinary differential equations, which can help to find explicit solutions. This is one of the reasons for which it  is important to investigate exist of such functions.\\ 
For instance it has been applied to the study of multiplicity of solutions to the Yamabe problem on the Riemannian manifold $(M^m,g)$ (see \cite{Henry}), which consists of finding metrics of constant scalar curvature conformal to $g$. If the scalar curvature of $g$ (denote $s_g$) is constant, then
writing a conformal metric as $h=u^{\frac{4}{m-2} } g$ (for a positive function $u$) we have that $h$ has constant scalar curvature $\lambda$ if and
only if $u$ is a positive solution of  the Yamabe equation $$-\frac{4(m-1)}{m-2}\Delta_gu+s_gu=\lambda u^{\frac{m+2}{m-2}}.$$
If there is an isoparametric function $f$ on $(M,g)$ then one can look  for solutions of the form $u = \varphi \circ f$. It follows that $u$ solves the Yamabe equation if $\varphi$ solves the ordinary differential equation $$-\frac{4(m-1)}{m-2}(\varphi^{\prime\prime}a+\varphi^{\prime}b)+ s_g\varphi=\lambda\varphi^{\frac{m+2}{m-2}}\quad \text{for}\; a,b\; \text{given by}\; (1),(2).$$
In the case of a Riemannian product $(M \times N , g+h )$ an isoparametric function on any of the factors gives an isoparametric function in the product. However, there are examples 
of products with mixed isoparametric functions. The trivial examples are the radial functions on $\mathbb{R}^n$, but there are also examples for instance in $\mathbb{S}^2\times\mathbb{S}^2$ (see \cite{6}).\\ 
We will be interested in this article in products with Euclidean space, namely $(M\times \mathbb{R}^n , g + dx^2 )$. It is  an important problem  to understand the finite energy solutions to the Yamabe equation  in such products (see for instance \cite{Akutagawa, Ammann}). Note that positive finite energy  solutions (which have to vanish at infinity)  
must have compact level sets.  There is a well-known such solution 
which is a radial function on $\mathbb{R}^n$ (see \cite{Akutagawa}). Are there other solutions? It is conjectured that the answer is NO under certain conditions, for instance if $g$ is Einstein. The case when
$M=\mathbb{S}^m$ is particularly important. For instance, when $n=m=2$ if there exist no such solution then one would prove that the Yamabe invariant of
$\mathbb{S}^2 \times \mathbb{S}^2$ is strictly greater than the one of $\mathbb{CP}^2$.\\ 
Our first result says that such solutions could not be built by an isoparametric function: 
\begin{theorem}\label{a}
An isoparametric function on $\mathbb{R}^n\times\mathbb{ M}^m$, $n,m \geq 2$,  with compact level sets $($where $\mathbb{M}^m$ is a closed Riemannian manifold$)$ is a radial function of $\mathbb{R}^n$.
\end{theorem}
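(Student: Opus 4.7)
The plan is to identify the minimal focal variety of $f$ as $\{x_0\}\times\mathbb{M}^m$ for some $x_0\in\mathbb{R}^n$; once this is done, $f$ must be a function of the Riemannian distance to this focal variety, which equals $|x-x_0|$ in the product, giving the desired radial structure.

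First I would show that exactly one of $\inf f$ and $\sup f$ is attained. If both were attained, the preimages $M_\pm$ would be compact focal submanifolds and the orthogonal gradient flow (whose trajectories are unit-speed geodesics by standard isoparametric theory) would realise $\mathbb{R}^n\times\mathbb{M}^m$ as the union of level sets $f^{-1}(t)$, $t\in[\inf f,\sup f]$, at Riemannian distance at most $\int_{\inf f}^{\sup f} d\tau/\sqrt{a(\tau)}<\infty$ from $M_-$; combined with the compactness of $M_-$ this would make $\mathbb{R}^n\times\mathbb{M}^m$ itself compact, contradicting the non-compactness of the Euclidean factor. After replacing $f$ with $-f$ if needed, I may assume $M_-:=f^{-1}(\inf f)$ is a compact connected focal submanifold and $\sup f$ is not attained. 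Next, using that geodesics in the product decompose as $\gamma(s)=(x_p+s\nu_x,\exp_{y_p}(s\nu_y))$ with $|\nu_x|^2+|\nu_y|^2=1$ and the ratio $|\nu_x|:|\nu_y|$ preserved along the curve, any unit normal $\nu\in N_pM_-$ with $\nu_x=0$ would produce a flow line confined to the compact slice $\{x_p\}\times\mathbb{M}^m$; sequential compactness together with $f(\gamma(s))\to\sup f$ would force $\sup f$ to be attained, a contradiction. Hence every unit normal to $M_-$ has nontrivial Euclidean component.

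The crux of the argument is to upgrade this to $\nu_y=0$ for every normal, equivalently $T_p\mathbb{M}^m\subseteq T_pM_-$ at every $p\in M_-$. I plan to exploit the global smoothness of $f$ together with the strict monotonicity of $\Phi$ (which follows from $(\Phi')^2=a(\Phi)>0$ on $(\inf f,\sup f)$): the identity $f=\Phi\circ d(\cdot,M_-)$, valid on a tubular neighbourhood of $M_-$, forces $\Phi^{-1}\circ f$ to be smooth globally on $\mathbb{R}^n\times\mathbb{M}^m\setminus M_-$ with $|\nabla(\Phi^{-1}\circ f)|=1$, so $M_-$ has no focal or cut locus in the ambient and the normal exponential map is a global diffeomorphism off the zero section. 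A compact submanifold of $\mathbb{R}^n\times\mathbb{M}^m$ with this property must contain $T_p\mathbb{M}^m$ in its tangent space at every point: otherwise, a tangent vector to $\mathbb{M}^m$ at $p$ not lying in $T_pM_-$, combined with sequential compactness of the geodesic it generates in the compact slice $\{x_p\}\times\mathbb{M}^m$ and the product form of geodesics, would produce a point outside $M_-$ admitting several distinct nearest points in $M_-$, violating the absence of cut locus. Hence $M_-$ is a union of fibres $\{x\}\times\mathbb{M}^m$, and compactness plus connectedness force $M_-=\{x_0\}\times\mathbb{M}^m$ for a unique $x_0\in\mathbb{R}^n$.

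Finally, the ambient distance then simplifies to $d((x,y),M_-)=|x-x_0|$, so $f(x,y)=\Phi(|x-x_0|)$ is radial, as claimed. The principal technical obstacle lies in the third paragraph: controlling the cut locus of a compact submanifold in the product $\mathbb{R}^n\times\mathbb{M}^m$ using the rigidity of global smoothness imposed by the isoparametric equations, so as to force the normals to $M_-$ to be purely Euclidean.
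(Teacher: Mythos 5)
Your first paragraph only rules out the possibility that \emph{both} extrema are attained; it never addresses the case where \emph{neither} is attained (both focal varieties empty), and yet in the next sentence you assume $M_-\neq\emptyset$. This case is genuine and is exactly where the hypothesis $n\geq 2$ enters: for $n=1$ the projection $\mathbb{R}\times\mathbb{M}^m\rightarrow\mathbb{R}$ is isoparametric with compact level sets and empty focal varieties, so the exclusion cannot be free. The paper handles it with the structure theorem (with no focal variety, $\mathbb{R}^n\times\mathbb{M}^m$ would be an $\mathbb{S}^1$-bundle or a rank-one vector bundle over a regular level $M_t$) and the homology comparison $H_{m+n-1}(M_t)\cong H_{m+n-1}(\mathbb{R}^n\times\mathbb{M}^m)=H_{m+n-1}(\mathbb{M}^m)=0$, impossible for a closed $(m+n-1)$-manifold. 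Your proposal contains no substitute for this step.

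The more serious gap is the step you yourself flag as the ``principal technical obstacle'': deducing $T_p\mathbb{M}^m\subseteq T_pM_-$ from the absence of a cut locus. What your second paragraph actually establishes --- every unit normal to $M_-$ has nonzero Euclidean component --- is far too weak: it holds for the graph of \emph{any} smooth map $\mathbb{M}^m\rightarrow\mathbb{R}^n$, constant or not, so it cannot by itself single out the slices. The proposed upgrade (``a tangent vector to $\mathbb{M}^m$ not lying in $T_pM_-$ \ldots would produce a point admitting several distinct nearest points in $M_-$'') is not an argument: the geodesic generated by such a vector is in general neither normal to $M_-$ nor related in any evident way to footpoint multiplicity, and recurrence inside the compact slice does not by itself manufacture two minimizing normal segments; in addition, passing from global smoothness of $\Phi^{-1}\circ f$ to ``no cut locus and $\exp^{\perp}$ a global diffeomorphism'' itself leans on the cited structure theorems (Wang, Bolton, Miyaoka) rather than following formally from the tubular-neighbourhood identity. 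So the heart of the theorem is missing from your plan. The paper closes precisely this gap by a different and complete mechanism that you do not use at all: the focal variety is a \emph{minimal} submanifold of $\mathbb{R}^n\times\mathbb{M}^m$; embedding $\mathbb{M}^m$ isometrically in $\mathbb{R}^k$, the $\mathbb{R}^n$-coordinates of the immersion $M_-\rightarrow\mathbb{R}^{n+k}$ are harmonic, hence constant by compactness of $M_-$, so $M_-\subset\{p\}\times\mathbb{M}^m$; then the homotopy equivalence of $M_-$ with $\mathbb{R}^n\times\mathbb{M}^m$ (vector bundle structure) forces $M_-=\{p\}\times\mathbb{M}^m$, and the tube structure of the level sets gives the radial form of $f$. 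To salvage your route you would have to prove that a compact submanifold of $\mathbb{R}^n\times\mathbb{M}^m$ whose normal exponential map is a diffeomorphism onto the whole space must be a slice $\{x_0\}\times\mathbb{M}^m$ --- which is exactly the content you have not supplied.
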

Another interesting fact is related with rigidity of gradient Ricci solitons (see \cite{5, Fernandez}). A gradient Ricci soliton is a Riemannian manifold $(M,g)$ together a smooth function $f$ that satisfies $Ric+Hessf=\lambda g$, where $\lambda$ is a real constant, $Hessf$ is the Hessian of the function $f$ (which is called the potential of the soliton). Taking a product $N\times\mathbb{R}^k$ with $N$ being Einstein with Einstein constant $\lambda$ and $f =\lambda |x|^2/2$ on $\mathbb{R}^k$ yields a gradient Ricci soliton. We say that a gradient soliton is \textit{rigid} if it is isometric to a quotient of $N\times\mathbb{R}^k$, where $N,\mathbb{R}^k$ and $f$ as above. Now, if $(M,g,f)$ is a gradient Ricci soliton for $\lambda\neq 0$ then $\nabla(s_g +|\nabla f|^2)=2\lambda\nabla f$ (see \cite{5}) and $\Delta f=n\lambda-s_g$: so if $s_g$ is constant then
$f$ is an isoparametric function. Our first result also implies that, a gradient Ricci soliton of constant scalar curvature of
the form $(N\times\mathbb{R}^k,h+dx^2,f)$  with $N$ compact and such that  each level set of $f$ is also compact, is rigid (but this can also be proved directly without using the theorem).\medskip \\
Without the compactness condition on the level sets of the isoparametric function one would still like to know if there could be examples which do not come
from isoparametric functions on $M$. We will only consider the case of $\mathbb{S}^2 \times\mathbb{R}^2$. Using the ideas developed by Urbano in \cite{6} for the case $\mathbb{S}^2\times\mathbb{S}^2$,  we will prove 

\begin{theorem}\label{b}
The isoparametric hypersurfaces with constant principal curvatures in $\mathbb{S}^2\times\mathbb{R}^2$ are of the form $\mathbb{S}^2\times\mathbb{S}^1(r)$ $($for $r\in\mathbb{R}^+)$ or $\mathbb{S}^1(t)\times\mathbb{R}^2$ $($for $t\in(0,1))$.
\end{theorem}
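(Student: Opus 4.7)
The plan is to adapt the angle-function approach Urbano used for $\mathbb{S}^2\times\mathbb{S}^2$ to the present setting, with the second complex structure replaced by the flat geometry of $\mathbb{R}^2$. Fix a local unit normal $N$ to the hypersurface $M^3\subset\mathbb{S}^2\times\mathbb{R}^2$ and decompose $N=N_1+N_2$ with $N_1\in T\mathbb{S}^2$ and $N_2\in T\mathbb{R}^2$. Set $C:=|N_1|^2\in[0,1]$, and let $T$ denote the tangential projection of $N_1$ onto $TM$. Since the product projector $P(X_1,X_2)=(X_1,0)$ is self-adjoint and parallel for the product Levi-Civita connection $\bar{\nabla}$, a short computation using the Weingarten identity $\bar{\nabla}_X N=-AX$ gives
\[\nabla^M C=-2\,AT\quad\text{and}\quad |T|^2=C(1-C),\]
so in particular $T$ vanishes exactly where $C\in\{0,1\}$.

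The argument will split into three cases according to the range of $C$. First I would dispose of the two boundary cases. If $C\equiv 1$ then $N\in T\mathbb{S}^2$ everywhere on $M$, hence $T\mathbb{R}^2\subset TM$. Because $T\mathbb{R}^2$ is a parallel distribution on the ambient, each leaf $\{x\}\times\mathbb{R}^2$ through a point of $M$ stays in $M$, so $M=\Sigma\times\mathbb{R}^2$ for a curve $\Sigma\subset\mathbb{S}^2$. The shape operator of such an $M$ has two zero principal curvatures (along $T\mathbb{R}^2$) and a third equal to the geodesic curvature of $\Sigma$; constancy of the principal curvatures thus translates into $\Sigma$ being a circle $\mathbb{S}^1(t)$. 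The case $C\equiv 0$ is handled symmetrically and yields $M=\mathbb{S}^2\times\mathbb{S}^1(r)$.

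The substance of the proof is therefore to show that $C$ cannot take intermediate values. On any open set $U\subset M$ where $0<C<1$, the vector $T$ is nowhere zero. Since the ambient curvature is supported entirely in the $\mathbb{S}^2$-factor,
\[\bar{R}(X,Y)N=\langle Y_1,N_1\rangle X_1-\langle X_1,N_1\rangle Y_1.\]
Choose a local orthonormal principal frame $\{e_1,e_2,e_3\}$ with $Ae_i=k_i e_i$. Since the $k_i$ are constant, the Codazzi equation applied to this frame, combined with $\nabla^M C=-2AT$ and $|T|^2=C(1-C)$, yields an overdetermined system in the connection coefficients $\omega^l_{ij}=\langle\nabla^M_{e_i}e_j,e_l\rangle$ and in the $\mathbb{S}^2$-inner products $m_{il}=\langle (e_i)_1,(e_l)_1\rangle$. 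Running through the multiplicity patterns of $(k_1,k_2,k_3)$, and using the Gauss equation of the immersion to pin down the intrinsic sectional curvatures of $M$, should force a contradiction in each subcase.

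The main obstacle is precisely this third case. The flatness of the $\mathbb{R}^2$-factor kills half of the ambient-curvature contribution present in Urbano's $\mathbb{S}^2\times\mathbb{S}^2$ analysis, so the bookkeeping is lighter; the genuine difficulty will be the degenerate subcases where some $k_i$ vanishes while $T$ remains nonzero. In such situations the algebraic identities do not immediately close, and one likely needs a supplementary direct study of the integral curves of $T$ on $U$ and of their interaction with the parallel distribution $T\mathbb{R}^2$ in order to rule out every mixed-$C$ possibility.
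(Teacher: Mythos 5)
Your setup is fine as far as it goes: your $C=|N_1|^2$ is just an affine rescaling of the paper's $C=\langle PN,N\rangle$, your projector plays the role of the paper's parallel product structure $P$, and the identities $\nabla^M C=-2AT$, $|T|^2=C(1-C)$ together with the boundary cases $C\equiv 0,1$ are correct (though note that $C\equiv 0$ gives $\mathbb{S}^2\times\gamma$ with $\gamma\subset\mathbb{R}^2$ of constant curvature, so lines as well as circles must be mentioned). But the heart of the theorem is ruling out the intermediate range $0<C<1$, and there the proposal is a plan rather than a proof: you assert that Codazzi and Gauss ``should force a contradiction in each subcase'' and you yourself concede that in the degenerate subcases (some $k_i=0$ while $T\neq 0$) ``the algebraic identities do not immediately close.'' That unproved case is precisely where all the content lies, so there is a genuine gap.

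The missing ingredient is not more patience with the frame computations but the isoparametric hypothesis itself, which your pointwise Codazzi/Gauss bookkeeping never invokes and which the paper uses twice in an essential way. First, before any algebra one must know that $C$ is constant on each regular level $M_t$: the paper obtains this by taking the trace of the radial curvature (Riccati) equation along the geodesic unit normal $N=\nabla f/|\nabla f|$, where constancy of the principal curvatures makes $\operatorname{tr}(S_t^2)$ and $3H'(t)$ constant on $M_t$, hence $\operatorname{tr}(R_N)=\tfrac{1+C}{2}$ (equivalently $|N_1|^2$) is constant, hence $C$ is. Only then does $\nabla C=-2S(X)=0$ put $X$ in the kernel of the shape operator, after which the vanishing of $\Delta C$, the Codazzi equation and the Hessian of $C$ close up to give $1-C^2=0$ whenever $H\neq 0$. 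Second, the remaining case $H=0$ is not excluded pointwise at all: it is excluded by differentiating along the parallel family, since $3H'(t)=\operatorname{tr}(S_t^2)+\operatorname{tr}(R_N)$ and $\operatorname{tr}(R_N)=|N_1|^2>0$ on the set in question, contradicting $H\equiv 0$ for all $t$. A single constant-principal-curvature hypersurface carries no $t$-derivative of $H$, so your scheme has no access to this step, and it is exactly in such minimal/degenerate situations that you acknowledge the algebra does not close. To repair the argument you should first prove the constancy of $C$ on each level set via the trace of the Riccati equation, then run the paper's (or an equivalent) closed computation in the adapted frame, and finally dispose of the minimal family using $H'(t)$.
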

Note that in general, there are examples of isoparametric hypersurfaces with  non-constant principal curvatures, 
as in the examples  in \cite{7} %by Q.M. Wang%
for certain  complex projective spaces.\medskip \\
\textbf{Acknowledgements}. The author would like to thank Jimmy Petean for many useful comments and support during preparation of this paper.

\section{Compact isoparametric hypersurfaces in \texorpdfstring{$\mathbb{R}^n\times\mathbb{M}^m$}{} }
In this section we will prove Theorem \ref{a}.
We start by recalling some structural results for isoparametric functions on general Riemannian manifolds.\medskip \\
Let $f$ be an isoparametric function on a connected complete Riemannian manifold $L$. Then
\begin{enumerate}
\item The focal varieties of $f$ are smooth submanifolds of $L$ (Theorem A, \cite{8});
\item The interior of $f(L)$ only has regular values (Lemma 3, \cite{8});
\item Each regular level set of $f$ is a tube over either of the focal varieties (Theorem A, \cite{8});
\item The focal varieties are pure minimal submanifolds (Theorem 1.3, \cite{4}).
\end{enumerate}

Now, Let $\mathbb{M}^m$ be a connected, closed Riemannian manifold. We consider a family of compact isoparametric hypersurfaces $M_t$  in $\mathbb{R}^n\times\mathbb{M}^m$ with $n,m\geq 2$, i.e. exist an isoparametric function $f:\mathbb{R}^n\times\mathbb{M}^m\rightarrow\mathbb{R}$ such that each $M_t=f^{-1}(t)$ is compact.\medskip \\
If the focal varieties of $f$ are empty (i.e. $M_-=M_+=\phi$) then, from  Theorem 1.1 in \cite{4}  and the fact that $\mathbb{R}^n\times\mathbb{M}^m$ can not be an $\mathbb{S}^1$ bundle over some $M_t$ (since each $M_t$ are compact), $\mathbb{R}^n\times\mathbb{M}^m$ is a rank one vector  bundle over some $M_t$ regular hypersurface. It is well-known that exits a deformation retract of the total space over the base space of a vector bundle. This implies that, the homology group of $M_t$ and $\mathbb{R}^n\times\mathbb{M}^m$ are equivalent. In particular, we obtain  $0=H_{m+n-1}(\mathbb{M}^m)=H_{m+n-1}(M_t)$ since $n-1>0$, which is a contradiction. Therefore there is a non-empty
focal variety.\\
In the case that $f$ has $M_-\neq\phi$ and $M_+\neq\phi$, again by Theorem 1.1 in \cite{4} we have that $\mathbb{M}^m\times\mathbb{R}^n$ is diffeomorphic to a union of two disk bundles over $M_+$ and $M_-$. Since that $M_-$ and $M_+$ are compact, $\mathbb{R}^n\times\mathbb{M}^m$ would be compact.\\
Without loss of generality, we can assume that the set $M_-$ of minimum points of $f$ it is non-empty and $M_+ = \phi$. \\ 
Since $\mathbb{R}^n\times\mathbb{M}^m$ cannot be the union of two disk bundles over compact submanifolds, we have that $\mathbb{R}^n\times\mathbb{M}^m$ is a vector bundle over $M_-$ (Theorem 2 in \cite{1}).\medskip \\
Now, we point out  a  fact about minimal submanifolds. See \cite{9} for more details.
\begin{lema}
Let $\Phi:L^n\rightarrow\mathbb{R}^k$ be an isometric immersion with the mean curvature vector $H$, then $$\Delta\Phi=nH,$$
where $\Delta\Phi=(\Delta\Phi^1,\cdots,\Delta\Phi^k).$
\end{lema}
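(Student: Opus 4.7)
The plan is to prove this classical identity (due to Beltrami) by computing the Laplacian of each coordinate function $\Phi^j$ in a well-chosen local frame and then identifying the result via the Gauss formula.

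First, I would fix an arbitrary point $p\in L$ and choose a local orthonormal frame $\{E_1,\dots,E_n\}$ on $L$ that is geodesic at $p$, i.e.\ $(\nabla^L_{E_i}E_j)(p)=0$ for all $i,j$. For any smooth function $g$ on $L$ one has $\Delta g=\sum_i\bigl(E_i(E_i(g))-(\nabla^L_{E_i}E_i)(g)\bigr)$, so at $p$ the second term drops and
\[
\Delta\Phi^j(p)=\sum_{i=1}^n E_i\bigl(d\Phi(E_i)^j\bigr)(p),\qquad j=1,\dots,k.
\]

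Next, I would view $d\Phi(E_i)$ as an $\mathbb{R}^k$-valued function on $L$, so that its directional derivative in direction $E_i$ coincides componentwise with the ambient Euclidean covariant derivative $\widetilde{\nabla}_{E_i}d\Phi(E_i)$. The Gauss formula for the isometric immersion $\Phi$ gives
\[
\widetilde{\nabla}_{E_i}d\Phi(E_i)=d\Phi(\nabla^L_{E_i}E_i)+\alpha(E_i,E_i),
\]
where $\alpha$ is the vector-valued second fundamental form. At $p$, the tangential term vanishes thanks to the choice of geodesic frame, so summing over $i$ and reading off the $j$-th Euclidean coordinate yields
\[
\Delta\Phi(p)=\sum_{i=1}^n\alpha(E_i,E_i)(p)=nH(p),
\]
by the definition $H=\frac{1}{n}\operatorname{tr}_g\alpha$ of the mean curvature vector.

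Since $p$ is arbitrary, the identity $\Delta\Phi=nH$ holds pointwise on $L$. I do not expect any genuine obstacle here: the whole argument is a one-line application of the Gauss decomposition once the Laplacian is written out in a geodesic frame, and the only care needed is to interpret directional derivatives of the vector-valued map $d\Phi(E_i)$ as Euclidean (ambient) covariant derivatives so that the Gauss formula applies.
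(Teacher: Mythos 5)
Your proof is correct and follows essentially the same route as the paper: both compute $\Delta\Phi$ in an orthonormal frame and apply the Gauss decomposition, identifying the normal (second fundamental form) part with $nH$. Your use of a frame geodesic at a point merely makes the tangential term $\Phi_*\nabla^L_{e_i}e_i$ vanish pointwise, whereas the paper keeps it explicitly in the Laplacian formula; this is only a cosmetic difference.
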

\begin{proof}
Let $\{e_i\}$ be a local orthonormal frame field of $L$. Then
\begin{align*}
\Delta\Phi&=\sum_{i}\nabla^{\mathbb{R}^k}_{\Phi_*e_i}\Phi_*e_i-\Phi_*\nabla^{L}_{e_i}e_i\\
&=\sum_{i} (\nabla^{\mathbb{R}^k}_{\Phi_*e_i}\Phi_*e_i)^{\perp}=nH.
\end{align*}
\end{proof}
On the other hand, let $L\rightarrow\bar{L}\subset\bar{\bar{L}}$ be isometric immersions with connections $\nabla,\bar{\nabla}$ and $\bar{\bar{\nabla}}$ respectively. Denote $H$ and $\bar{H}$ to be the mean curvatures of $L$ in $\bar{L}$ and $L$ in $\bar{\bar{L}}$ respectively. Then
\begin{align*}
nH&=\sum(\bar{\nabla}_{e_i}e_i)^{\perp}\\
&=(\sum(\bar{\bar{\nabla}}_{e_i}e_i)^{T\bar{L}})^{\perp}\\
&=(\sum(\bar{\bar{\nabla}}_{e_i}e_i)^{\perp})^{T\bar{L}}=n\bar{H}^{T\bar{L}}.
\end{align*}
In our situation, $\Phi:M_-\rightarrow\mathbb{R}^n\times\mathbb{M}^m\subset\mathbb{R}^{n+k}
$ is minimal for some $k$. Thus
\begin{equation*}
\Delta\Phi\perp T(\mathbb{R}^n\times\mathbb{M}^m).
\end{equation*}
Hence, $(\Delta\Phi^1,\cdots,\Delta\Phi^n)=0$.\\
Since $M_-$ is compact it follows that the functions $\Phi^j$   are constant for all $j=1,\ldots,n$. Thus, the focal variety of $f$ is of the form $M_-=\{p\}\times V$, where $V\subseteq\mathbb{M}^m$ is a submanifold and $p\in\mathbb{R}^n$.\medskip \\
Since that  the submanifold $V$ and $\mathbb{R}^n\times\mathbb{M}^m$ are homotopy equivalent, we have
$$H_m(\mathbb{M}^m)=H_m(V).$$
Therefore, $dim(V)=m$ and $$M_-=\{p\}\times\mathbb{M}^m.$$
But since the level sets $M_t$ are tubes over $M_-$ this of course implies Theorem \ref{a}.

\section{Isoparametric hypersurfaces with constant principal curvatures in \texorpdfstring{$\mathbb{S}^2\times\mathbb{R}^2$}{}}

In this section we will prove Theorem \ref{b}. Denotation and  background will be the same as in \cite{6} and we refer the reader to this article for more details.\\
Let $\mathbb{S}^2$, $\mathbb{R}^2$  be  space forms with curvatures 1 and 0 respectively. We define the complex structures $L_1$ and $L_2$ by:
\begin{align*}
L_1:T\mathbb{S}^2&\rightarrow T \mathbb{S}^2\\
v&\mapsto L_1(v):=p \wedge v \quad \text{for}\; p\in\mathbb{S}^2,\quad v\in T_p\mathbb{S}^2;\\
L_2:\mathbb{R}^2&\rightarrow \mathbb{R}^2\\
(q_1,q_2)&\mapsto\;L_2((q_1,q_2)):=(-q_2,q_1).
\end{align*}
We consider $\mathbb{S}^2\times \mathbb{R}^2$ with the product metric and the complex structures $J_1=(L_1,L_2),$ $J_2=(L_1,-L_2)$. We notice that the product structure $P$ in  $\mathbb{S}^2\times  \mathbb{R}^2$ defined by $P(v_1,v_2)=(v_1,-v_2)$ satisfies that $P=-J_1J_2=-J_2J_1$, moreover, $P$ is parallel with respect of the Levi-Civita connection of $\mathbb{S}^2\times \mathbb{R}^2$.\medskip \\
Let $M^3\subset \mathbb{S}^2\times\mathbb{R}^2$ be an oriented hypersurface with $N=(N_1,N_2)$ a unit normal vector field to $M^3$. We consider the function $C$ and vector field $X$ tangent to $M^3$  given by:
$$C:=\langle PN,N\rangle\;\; \text{and}\;\; X:=PN-CN.$$
\begin{lema}
Let $f:\mathbb{S}^2\times\mathbb{R}^2\rightarrow\mathbb{R}$ be an isoparametric function. If each regular hypersurfaces $M_t=f^{-1}(t)$  has constant principal curvatures, then the function $C_t$ corresponding to each $M_t$ is constant.
\end{lema}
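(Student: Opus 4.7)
My plan is to follow the strategy of Urbano: derive a clean formula for the gradient of $C$ along each level set $M_t$ and then exploit the Codazzi equation of $M_t$ inside the product $\mathbb{S}^2\times\mathbb{R}^2$ together with the constant principal curvature hypothesis.

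First I would differentiate $C=\langle PN,N\rangle$ along a tangent vector $v$ to $M_t$. Since the product structure $P$ is parallel for the Levi--Civita connection of $\mathbb{S}^2\times\mathbb{R}^2$ and is self-adjoint, and since the Weingarten formula reads $\bar\nabla_v N=-Av$, a direct computation yields $v(C)=2\langle PN,\bar\nabla_v N\rangle=-2\langle X+CN,Av\rangle=-2\langle AX,v\rangle$, so that $\nabla^{M_t}C=-2AX$. Hence $C$ is constant on a connected $M_t$ if and only if $AX\equiv 0$ there, and this is what the rest of the argument aims to establish.

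Next I would bring in the Codazzi--Mainardi equation $(\nabla_v A)w-(\nabla_w A)v=(\bar R(v,w)N)^T$. Because the $\mathbb{R}^2$-factor is flat and the $\mathbb{S}^2$-factor has constant curvature one, the ambient curvature tensor can be written entirely in terms of $P$; a short calculation using the projectors $(1\pm P)/2$ gives, for tangent $v,w$, the identity $\bar R(v,w)N=\tfrac{1}{2}\bigl[\langle w,X\rangle(v+Pv)-\langle v,X\rangle(w+Pw)\bigr]$. This is the analog in our setting of the identity used by Urbano in \cite{6}, and it couples $\nabla A$ to $X$ and the $P$-images of the tangent vectors.

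Then the constant principal curvature assumption enters. Picking a smooth local orthonormal principal frame $\{e_1,e_2,e_3\}$ with $Ae_i=\lambda_i e_i$ and writing $\nabla_v e_i=\sum_j\omega_{ij}(v)e_j$ with $\omega_{ij}=-\omega_{ji}$, the condition $v(\lambda_i)=0$ forces $(\nabla_v A)e_i=\sum_j(\lambda_i-\lambda_j)\omega_{ij}(v)e_j$. Substituting into Codazzi and pairing with each $e_k$ produces a system of scalar relations tying the quantities $(\lambda_i-\lambda_j)\omega_{ij}(e_k)$ to the components $X^i=\langle X,e_i\rangle$ and to the matrix entries $\langle Pe_i,e_j\rangle$. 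I expect the main obstacle to be the ensuing case analysis, organized by the multiplicity pattern of $\{\lambda_1,\lambda_2,\lambda_3\}$ (three simple eigenvalues; one double and one simple; or one triple). In each case I would select convenient test pairs $(v,w)$ so as to isolate $\lambda_i X^i$ for each $i$, and then argue that every such product must vanish, i.e.\ $AX=0$ pointwise on $M_t$. Combined with the first step this gives $\nabla^{M_t}C=0$, so $C$ is constant on each connected regular level set, as claimed.
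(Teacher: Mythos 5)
Your reduction $\nabla^{M_t}C=-2SX$ is correct (it is exactly the identity $S(X)=-\nabla C/2$ the paper uses later), but the heart of your argument is missing: the claim that the Codazzi relations, in a principal frame with constant eigenvalues, force $SX=0$ is only announced (``I expect \dots'', ``I would argue \dots''), never carried out. That case analysis is precisely where all the difficulty lies, and it is not clear it can be done with Codazzi alone: in Urbano's treatment of $\mathbb{S}^2\times\mathbb{S}^2$, which you cite as the model, the analysis of hypersurfaces with constant principal curvatures is a lengthy argument that also invokes the Gauss equation, and your sketch gives no indication of how the relations $(\lambda_i-\lambda_j)\omega_{ij}(e_k)$, the components of $X$, and the entries $\langle Pe_i,e_j\rangle$ actually combine to kill $SX$ in each multiplicity pattern (there is also the unaddressed issue that a smooth principal frame only exists where the multiplicities are locally constant). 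As it stands, the decisive step is a conjecture, not a proof.

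Note also that your route never uses the hypothesis that $f$ is isoparametric: you work on a single hypersurface with constant principal curvatures, so if your plan succeeded it would prove the strictly stronger statement that every such hypersurface of $\mathbb{S}^2\times\mathbb{R}^2$ has constant $C$ --- which raises the bar for the missing computation considerably. The paper's proof is much shorter precisely because it exploits the parallel family: $N=\nabla f/|\nabla f|$ is a geodesic field, the radial curvature (Riccati) equation $-\nabla_N S_t+S_t^2=-R_N$ holds along the normal flow, and taking traces gives $3H'(t)=\operatorname{tr}(S_t^2)+\operatorname{tr}(R_N)$; since $H'(t)$ and $\operatorname{tr}(S_t^2)$ are constant on $M_t$, so is $\operatorname{tr}(R_N)$, which a frame computation identifies as $\frac{1+C}{2}$, forcing $C$ to be constant. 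A small additional slip in your sketch: with the projection $\frac{1}{2}(\mathrm{Id}+P)$ onto the $\mathbb{S}^2$-factor the curvature identity reads $\bar R(v,w)N=\frac{1}{4}\bigl[\langle w,X\rangle(v+Pv)-\langle v,X\rangle(w+Pw)\bigr]$ (coefficient $\frac14$, not $\frac12$, consistent with the Codazzi equation displayed in the paper), and only its tangential part enters Codazzi; this does not affect the structure of your plan, but the missing case analysis does.
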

\begin{proof}
By condition (1) of the definition of isoparametric function the unit vector field $N=\frac{\nabla f}{|\nabla f|}$  is a geodesic field. Since  the product structure $P$ is parallel,  the function $C_t$ is independent of the regular hypersurfaces $M_t$ (since $N(C_t)=\langle\nabla_NN,PN\rangle+\langle N,P\nabla_NN\rangle=0$).\\
Now, we consider the open set $U=\left\lbrace p\in M_t/ C^2(p)<1 \right\rbrace$. If $U$ is not empty then we can consider on $U$ the local orthonormal frame field, 
$$B=\left\lbrace  B_1=\frac{X}{\sqrt{1-C^2}},B_2=\frac{J_1N+J_2N}{\sqrt{2(1+C)}},B_3=\frac{J_1N-J_2N}{\sqrt{2(1-C)}}\right\rbrace.$$
From the Radial Curvature Equation  we have
$$-\nabla_NS_t + S_t^2=-R_N,$$
where $S_t$ is the shape operator of $M_t$ corresponding to $N$ and $R_N(\cdot)=R(\cdot,N)N$.\\
Taking trace we obtain that
$$-tr(R_N)=-tr(\nabla_NS_t)+tr(S_t^2)=-\nabla_N trS_t+tr(S_t^2)=-3H^{\prime}(t)+tr(S_t^2). $$
We are assuming that the principal curvatures $\mu_1(t),\mu_2(t)$ and $\mu_3(t)$ of $M_t$ are constant, then we have that $tr(S_t^2)=\mu_1^2+\mu_2^2+\mu_3^2$ is constant. Therefore $tr(R_N)$ is constant in $M_t$.\\ 
Now we compute $tr(R_N)$  in the frame $B$
\begin{align*}
R_N(B_1)&=\frac{1}{4\sqrt{1-C^2}}R^{\mathbb{S}^2}(PX+X,PN+N)N\\
&=\frac{1}{4\sqrt{1-C^2}}R^{\mathbb{S}^2}(N+X-C(X+CN),N+X+CN)N\\
&=\frac{1}{4\sqrt{1-C^2}}\{R^{\mathbb{S}^2}(N+X,CN)N +R^{\mathbb{S}^2}(-C(X+CN),N)N\}\\
&=\frac{1}{4\sqrt{1-C^2}}\{R^{\mathbb{S}^2}(X,CN)N+R^{\mathbb{S}^2}(-C(X+CN),N)N\}=0;\\
R_N(B_3)&=R^{\mathbb{R}^2}(B_3,N_2)N=0;\\
R_N(B_2)&=\frac{J_1N+J_2N}{\sqrt{2(1+C)}}|N_1|^2.
\end{align*}
Thus $tr(R_N)= \frac{1+C}{2}$ and the Lemma follows.
\end{proof}
Theorem \ref{b} is equivalent to the following:\\
{\bf Claim:} The isoparametric functions on $\mathbb{S}^2\times\mathbb{R}^2$ with regular level sets of constant principal curvatures only depend on one factor, i.e. $C^2=1$.
\begin{proof}
Let $f$ be an isoparametric function on $\mathbb{S}^2\times\mathbb{R}^2$ with $M_t=f^{-1}(t)$ of constant principal curvatures. The Lemma 2.1 implies that the function $C$ is constant.\\
Assume that $C\in(-1,1)$.\\
We are going to express the shape operator $S_0=S$ and tangential component of the product structure $P^T$ in the orthonormal frame field
\begin{equation*}
B= \left\{ B_1=\frac{X}{\sqrt{1-C^2}},\; B_2=\frac{(J_1+J_2)N}{\sqrt{2(1+C)}},\; B_3=\frac{(J_1-J_2)N}{\sqrt{2(1-C)}} \right\} .
\end{equation*}
Note that, 
$$\langle\nabla C,Y\rangle=\nabla_Y \langle N,PN\rangle=\langle \nabla_YN,PN\rangle+\langle N,P\nabla_YN\rangle=2\langle PN,-S(Y)\rangle=\langle -2S(X),Y\rangle$$
for $Y\in\Gamma(M)$. Then $S(X)=-\nabla C/2=0$ and we can write
$$S=
\left(
\begin{array}{ccc}
0 & 0  & 0 \\
0 & \sigma_{22} & \sigma_{23}\\
0 & \sigma_{23} & \sigma_{33}
\end{array}
\right).
$$ 
On the other hand, 
$$PB_1=\frac{1}{\sqrt{1-C^2}}(-CX+N(1-C^2))\;,\; PB_2=B_2\;,\; PB_3=-B_3, $$
thus
$$P^T=
\left(
\begin{array}{ccc}
-C & 0 & 0 \\
0 & 1 & 0\\
0 & 0 &-1
\end{array}
\right).
$$
By a direct computation, we obtain that
$$\Delta C=-6\langle X,\nabla H\rangle-2tr(S^2)C+2tr(P^TS^2).$$
Then we have
$$tr(S^2)C=tr(P^TS^2)=\sigma^2_{22}-\sigma^2_{33}=3H(\sigma_{22}-\sigma_{33}).$$
Assume now that $H\neq 0$.\\
From the expression  $$\sigma_{22}-\sigma_{33}=\frac{C}{3H}tr(S^2),$$
we see that  $\sigma_{22}-\sigma_{33}$ must be  constant.\medskip \\
Then
$\sigma^2_{22}+\sigma^2_{23}$ is also constant since $tr(S^2)+3H\{\sigma_{22}-\sigma_{33}\}=2\sigma^2_{22}+2\sigma^2_{23}$.\\
And since $$tr(S^2)+9H^2=2\sigma^2_{22}+2\sigma^2_{23}+2\sigma^2_{33}+2\sigma_{22}\sigma_{33},$$ 
we have that $2\sigma_{33}(\sigma_{33}+\sigma_{22})$ is constant.\\
Since $H\neq 0$, $\sigma_{33}$ must be constant, and hence $\sigma_{22}$ is also constant. It follows that each $\sigma_{ij}$ is constant.\medskip \\
Now, we compute $X(\sigma_{22})$ and $X(\sigma_{33}).$\\
Before we remember  the Codazzi equation of $M$ and the Hessian of the function $C$ respectively:
\begin{align*}
\nabla S(V,W,Z)-\nabla S(W,V,Z)&=\frac{1}{4}\langle V,X\rangle \langle PW+W,Z\rangle-\frac{1}{4}\langle W,X\rangle \langle PV+V,Z\rangle;\\
\nabla^2C(V,W)&=-2\nabla S(V,X,W)-2C\langle SV,SW\rangle + 2\langle PSV,SW\rangle.
\end{align*}
Since $J_i$ are parallel, we have $\nabla_XB_j=0$ for $j=1,2,3.$\\
Thus,
\begin{align*}
X(\sigma_{22})&=\nabla S(X,B_2,B_2)\\
&=\nabla S(B_2,X,B_2)+ \frac{|X|^2}{2}\\
&=\frac{|X|^2}{2} +\langle PSB_2,SB_2\rangle -C\langle SB_2,SB_2\rangle\\
&=\frac{1-C^2}{2} + (1-C)\sigma^2_{22} - (1+C)\sigma^2_{23},\\\\
X(\sigma_{33})&=\nabla S(X,B_3,B_3)\\
&=\nabla S(B_3,X,B_3) \\
&=\langle PSB_3,SB_3\rangle -C\langle SB_3,SB_3\rangle\\
&=(\sigma^2_{23}-\sigma^2_{33}) - C(\sigma^2_{23}+\sigma^2_{33})\\
&=(1-C)\sigma^2_{23}-(1+C)\sigma^2_{33}.
\end{align*}
From the equation $(\sigma_{22}^2+2\sigma_{23}^2+\sigma_{33}^2)C=tr(S^2)C=3H(\sigma_{22}-\sigma_{33})=\sigma_{22}^2-\sigma_{33}^2$, we obtain
$$(C-1)\sigma^2_{22}+(C+1)\sigma^2_{33}=-2C\sigma^2_{23}.$$
By combining the last expression with $X(\sigma_{33})=0$, we get
$$(C-1)\sigma^2_{22}=-(1+C)\sigma^2_{23}.$$
But $X(\sigma_{22})=0$ allow us to get
$1-C^2=0$.\medskip \\
The above argument means that the family of isoparametric hypersurfaces with constant function $C\in(-1,1)$ are all minimal.\\
But, if we assume that the family $M_t$ are all minimal, then from the trace of the Radial Curvature Equation we have
$$0=3H^{\prime}(t)=tr(S_t^2)+tr(R_N)=tr(S_t^2)+\frac{1+C}{2}>0.$$
This argument allow us to conclude that $C^2=1$.
\end{proof}


\begin{thebibliography}{21}
\bibitem{Akutagawa} \textsc{K. Akutagawa, L. Florit, J. Petean}, \textit{On Yamabe constants of Riemannian products}, Comm. Anal. Geom. 15 (2008), 947-969.
\bibitem{Ammann}\textsc{B. Ammann, M. Dahl, E. Humbert}, \textit{Smooth Yamabe invariant and surgery}, J. Differential Geom 94 (2013), 1-58.
\bibitem{1}\textsc{J. Bolton}, \textit{Trasnormal systems}, Q.J. Math, Oxford II ser. 24 (1973) 385-395.
\bibitem{2}\textsc{É. Cartan}, \textit{Familles de surfaces isoparamétriques dans les espaces à courbure constante}, Ann. di Mat. 17 (1938) 177–191.
\bibitem{3}\textsc{T. E. Cecil, P. Ryan}, \textit{Geometry of hypersurfaces}, Springer New York, 2015.
\bibitem{Cecil} \textsc{T. E. Cecil, Q.-S. Chi, G. R. Jensen}, \textit{Isoparametric hypersurfaces with four principal curvatures}, Ann. Math. 166 (2007), 1-76.
\bibitem{ChiIII} \textsc{Q. -S. Chi}, \textit{Isoparametric hypersurfaces with four principal curvatures III}, J. Differential Geom. 94 (2013), 469-504.
\bibitem{Chi}\textsc{Q. -S. Chi}, \textit{Isoparametric hypersurfaces with four principal curvatures IV}, ArXiv1605.00976 (2016).
\bibitem{Fernandez}\textsc{M. Fernandez-Lopez, E. Garcia-Rio}, \textit{On gradient Ricci solitons with constant scalar curvature}, Proc. Amer. Math. Soc. 144 (2016), 369-378.
\bibitem{Exotic1}\textsc{J. Q. Ge, Z. Z. Tang}, \textit{Isoparametric functions and exotic spheres}, J. Reine Angew. Math.
683 (2013), 161-180. 
\bibitem{Henry} \textsc{G. Henry, J. Petean}, \textit{Isoparametric functions and metrics of constant scalar curvature}, Asian Journal Math. 18 (2014), 53-68.
\bibitem{Miyaoka} \textsc{R. Miyaoka}, \textit{Isoparametric hypersurfaces with $(g,m)=(6,2)$}, Ann. Math. 177 (2013), 53-110.
\bibitem{4}\textsc{R. Miyaoka}, \textit{Trasnormal functions on a Riemannian manifold}, Diff. Geom. Appl. 31 (2013) 130–139.
\bibitem{Munzner}\textsc{M.F. Münzner}, \textit{Isoparametrische Hyperflächen in Sphären I}, Math. Ann. 251 (1980) 57–71;\\
\textsc{M.F. Münzner}, \textit{Isoparametrische Hyperflächen in Sphären II}, Math. Ann. 256 (1981) 215–232.
\bibitem{5}\textsc{P. Petersen, W. Wylie},\textit{ Rigidity of gradient Ricci solitons}, Pacific J. Math. 241 (2009),329–345.
\bibitem{Exotic2}\textsc{C. Qian, Z. Tang}, \textit{Isoparametric functions on exotic spheres}, Advances in Math. 272 (2015), 611-629.
\bibitem{Thor}\textsc{G. Thorbergsson}, \textit{A survey on isoparametric hypersurfaces and their generalizations}, in: Handbook of Differential Geometry, vol. I, North-Holland, Amsterdam, 2000, pp. 963–995
\bibitem{6} \textsc{F. Urbano}, \textit{On hypersurfaces of $\mathbb{S}^2\times\mathbb{S}^2$}, arXiv:1606.07595v1 [math.DG], 2016.
\bibitem{7}\textsc{Q.M. Wang}, \textit{Isoparametric hypersurfaces in complex projective spaces}, Proc. of the 1980 Beijing Symp. on Differential Geometry and Differential
Equations, vols. 1, 2, 3, Science Press, Beijing, 1982, pp. 1509–1523.
\bibitem{8}\textsc{Q.M. Wang}, \textit{Isoparametric functions on a Riemannian manifolds.I}, Math. Ann. 277 (1987) 639–646.
\bibitem{9}\textsc{Y. Xin}, \textit{Minimal submanifolds and related topics}, World Scientific Publishing, 2003.
\end{thebibliography}
\end{document}